\documentclass[letterpaper, 10 pt, conference]{ieeeconf} 	

\IEEEoverridecommandlockouts    


\addtolength{\textfloatsep}{-0.2in} 	


\newtheorem{theorem}{Theorem}[section]
\newtheorem{lemma}[theorem]{Lemma}
\newtheorem{definition}[theorem]{Definition}

\newtheorem{problem}{Problem}
\newtheorem{remark}[theorem]{Remark}
\newtheorem{assumption}{Assumption}

\newcommand{\mc}{\mathcal}

\newcommand{\rangesp}{\mathscr{R}}
\newcommand{\nullsp}{\mathscr{N}}

\newcommand{\rank}{\operatorname{rank}}
\newcommand{\real}{\mathbb{R}} 
\newcommand{\integ}{\mathbb{Z}}

\newcommand{\integnneg}{\mathbb{Z}_{\geq 0}}

\newcommand{\tsp}{\mathsf{T}} 
\newcommand{\pinv}{\dagger} 
\newcommand{\inv}{{\negat 1}} 
\newcommand{\negat}{\scalebox{0.75}[.9]{\( - \)}}

\newcommand*{\QEDB}{\hfill\ensuremath{\square}}
\newcommand*{\QEDBL}{\hfill\ensuremath{\blacksquare}}
\newcommand\oprocendsymbol{\hbox{$\square$}}
\newcommand\oprocend{\relax\ifmmode\else\unskip\hfill%
\fi\oprocendsymbol}
\newcommand{\map}[3]{#1: #2 \rightarrow #3}

\newcommand{\until}[1]{\{1,\dots,#1\}}

\newcommand{\norm}[1]{\Vert #1 \Vert}


\usepackage{hyperref}
\usepackage{graphicx,color}
\graphicspath{{./IMG/}{images/}{./IMG/SAGEMATH_PLOTS/}}
\usepackage{amsmath}
\usepackage{amssymb}
\usepackage{mathrsfs}
\usepackage{subfigure}
\usepackage{url}
\usepackage{booktabs}
\usepackage{array}
\usepackage[table]{xcolor}
\usepackage{mathtools} 		
\usepackage{epstopdf}
\usepackage{cite}
\usepackage[vlined,ruled]{algorithm2e}
\usepackage{upgreek}
\usepackage{dsfont}

\usepackage{epstopdf}
\epstopdfsetup{suffix=,}

\title{\LARGE \textbf{Data-Driven Exact Pole Placement for Linear Systems}}

\author{Gianluca Bianchin\thanks{
The author is with the ICTEAM and the Department of Mathematical Engineering at the Universit\'e catholique de Louvain.}}


\begin{document}
\maketitle

\begin{abstract}
The exact pole placement problem concerns computing a feedback gain
that will assign 
%
%
the poles of a system, controlled via static state feedback, at 
a set of pre-specified locations.
This is a classic problem in feedback control and numerous 
methodologies have been proposed in the literature for cases 
where a model of the system to control is available. 
In this paper, we study the problem of computing feedback gains for 
pole placement (and, more generally, eigenstructure assignment) directly 
from experimental data. 
Interestingly, we show that the closed-loop poles can be placed 
\textit{exactly} at arbitrary locations without relying on any model 
description but by using only finite-length trajectories generated by 
the open-loop system. In turn, these findings imply that classical 
control objectives, such as feedback stabilization or meeting transient 
performance specifications, can be achieved without first identifying a 
system model. 
Numerical experiments demonstrate the benefits of the data-driven 
pole-placement approach as compared to its model-based counterpart.
\end{abstract}

\section{Introduction}
\label{sec:1}
Data-driven control methods enable the synthesis of feedback 
controllers directly from historical data generated by a physical 
systems, and thus elude the need to construct or identify a model for 
the underlying system to control. 
Data-driven approaches are especially useful in scenarios where 
first-principle models are difficult to derive or the identification 
task may lead to numerically-unreliable model 
parametrizations~\cite{VK-FP:21,FD-JC-IM:22}.
In these cases, data-driven methods set out a huge potential since 
controllers can be synthesized directly from data, and thus possible 
uncertainties in the identified model parameters shall not propagate 
when these parameters are used for control design.

Data-driven control synthesis is, by now, a well-investigated 
area of research (see, e.g., the representative 
works~\cite{CD-PT:19,GB-VK-FP:19,JC-JL-FD:19}).
Despite the availability of several techniques to synthesize various 
types of controllers from data, to the best of our 
knowledge, the problem of data-driven pole placement and the (more 
general) problem of data-driven eigenstructure assignment via static 
feedback have not been studied until now. 
The classical problem of pole placement consists in finding a static 
feedback gain matrix such that the poles of the closed-loop system are 
in a set of pre-specified locations; analogously, the problem of 
eigenstructure assignment is that of finding a static feedback gain 
such that the closed-loop system has a pre-specified set of eigenvalues 
and eigenvectors (hereafter named eigenstructure). 
Motivated by this background, in this paper we study the data-driven 
pole placement problem and the data-driven eigenstructure assignment 
problem. Our results show that it is possible to place the 
closed-loop eigenvalues \textit{exactly}  at arbitrary locations 
(in this context, ``exactly'' means that the closed-loop poles can be 
placed at exact locations, in contrast with cases where they can placed 
within certain regions) by using formulas that can be applied directly 
on data. Moreover, our results show that the data-driven eigenstructure 
assignment problem is feasible under the same conditions required for 
its model-based counterpart.

\textit{Paper contributions.}
This paper features two main contributions. First, we show that static 
feedback gains that place the poles at an arbitrary set of locations 
can be computed directly from data collected from finite-length 
open-loop control experiments. We remark that our formulas apply also to 
cases where the open-loop system is not stable. We provide an explicit formula to compute the feedback gain  
and we show that the problem is always feasible when the underlying 
system is controllable. 
Second, we study the eigenstructure assignment problem and we provide a 
necessary and sufficient condition to check when such a problem is 
feasible. Moreover, we provide an explicit formula to compute feedback 
gains that assign a pre-specified eigenstructure. 
Finally, as a minor contribution, we evaluate via numerical simulations  
the benefits of the proposed data-driven method as compared to 
model-based approaches.

\textit{Related work.}
Several techniques have been proposed to synthesize controllers 
from data while avoiding the need to identify the system model.
Solutions for static feedback control are studied 
in~\cite{TMM-PR:17,ST-SA-NR-MM:20}, 
the linear quadratic regulator (LQR) 
in~\cite{CD-PT:19}, model predictive control (MPC) 
in~\cite{JC-JL-FD:19,JB-JK-MAM-FA:20}, 
minimum-energy control laws in~\cite{GB-VK-FP:19}, 
trajectory tracking problems in~\cite{LX-MT-BG-GF:21}, 
distributed control problems in~\cite{AA-JC:20}, and 
feedback-optimization controllers are proposed 
in~\cite{GB-MV-JC-ED:21-tac}.
Some extensions to the case of nonlinear systems are presented 
in~\cite{JB-FA:20,MG-CD-PT:22}. 
Most of these methods exploit the ability to express future
trajectories of a linear system in terms of a 
sufficiently-rich past trajectory, as shown by the Fundamental 
Lemma~\cite{JCW-PR-IM-BDM:05}. With respect to this body of literature, 
in this work, we focus on the exact pole-placement problem.

The model-based exact pole placement problem has a long history; 
a non-exhaustive list of references 
includes~\cite{AP-RS-TN:15,JK-NN-PV:85,SB-ED:82,MR-SE-AB-FT:09}.
However, all these classical methods construct on a model-based 
description of the system to control; in contrast with these methods, 
our focus here is to derive formulas for pole placement that can be 
applied directly on data.
In line with this work is the recent contribution~\cite{SM-RH:21}, where 
the authors 
study the problem of placing the closed-loop poles in linear matrix 
inequality (LMI) regions; in contrast, in this work, we focus on placing 
the poles at \textit{exact} locations and, moreover, we address the 
eigenstructure assignment problem.


\section{Preliminaries}
\label{sec:2}

%
%
%
In this section, we recall some useful facts on behavioral system theory 
from~\cite{JCW-PR-IM-BDM:05}. 
Given a signal (time-series) $\map{z}{\integ}{\real^\sigma}$, and 
scalars $T \in \integnneg \cup \{+\infty \}$, 
$i \in \integnneg, i \leq T$, we denote the restriction of $z$ to 
the interval $[i, i+T-1 ]$ by 
$z_{[i,i+T-1]} := \{z(i),\dots, z({i+T-1})\}$ 
(notice that $z_{[i,i+T-1]}$ is a $T$-long signal). With a slight 
abuse of notation, we will also denote by 
$z_{[i,i+T-1]} := (z(i), \dots, z({i+T})) \in \real^{\sigma T}$ the 
vectorization of the signal $z_{[i,i+T-1]}$, where the distinction 
will be clear from the context. 
%
%
Given the $T-$long signal $z_{[i,i+T-1]}$, we denote the associated 
Hankel matrix with $L$ (block) rows by:
\scalebox{.86}{\parbox{.5\linewidth}{
\begin{align*}
\mc H_{L}(z_{[i,i+T-1]}) = \begin{bmatrix}
z(i) & z(i+1) & \hdots & z(i+T-L)\\
z(i+1) & z(i+2) & \hdots & z(i+T-L+1)\\
\vdots & \vdots & \ddots & \vdots\\
z(i+L\negat1) & z(i+L) & \hdots & z(i+T-1)
\end{bmatrix},
\end{align*}
}}
Notice that 
$\mc H_{L}(z_{[i,i+T]}) \in \real^{L\sigma \times (T-L+1)}$.
The following definition is instrumental for our analysis.

\begin{definition}{\bf \textit{(Persistently Exciting 
Signal~\cite{JCW-PR-IM-BDM:05})}}
The signal $z_{[i,i+T-1]} \in \real^\sigma$ is persistently exciting of 
order $L$ if the matrix $\mc H_L(z_{[i,i+T-1]})$ has full row rank 
$\sigma L$.\QEDB
\end{definition}
\smallskip
We note that persistence of excitation implicitly requires that 
the number of columns of $\mc H_L(z_{[i,i+T-1]})$ is non-smaller 
than the number of rows, or  $T-L+1 \geq L\sigma$.

We recall the following properties of linear dynamical systems subject 
to a persistently exciting input. 

\begin{lemma}{\textit{\textbf{(Fundamental Lemma~\cite[Thm 1]{JCW-PR-IM-BDM:05}})}}
\label{lem:fundLemmarankHankelMatrix}
Assume that the linear system $x(t+1)=Ax(t)+Bu(t)$ is controllable and  
let $(u_{[0,T-1]}, x_{[0,T-1]})$ be an input-state trajectory generated 
by this system. 
If $u_{[0,T-1]}$ is persistently exciting of order $n+d$, then:
\begin{align*}
\rank \begin{bmatrix} \mc H_1(x_{[0,T-1]})\\  \mc H_d(u_{[0,T-1]}) \end{bmatrix} 
= n + d m.
\end{align*}
\hfill $\Box$
\end{lemma}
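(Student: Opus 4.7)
My plan is to argue by contradiction. The matrix in question has $n+dm$ rows, so its rank is at most $n+dm$ and equality fails precisely when there exists a nonzero left-annihilator $(\xi^\tsp, \eta_0^\tsp, \ldots, \eta_{d-1}^\tsp)$ with $\xi \in \real^n$ and each $\eta_i \in \real^m$. Read column-by-column, the annihilation condition amounts to
\begin{align*}
\xi^\tsp x(k) + \sum_{i=0}^{d-1} \eta_i^\tsp u(k+i) = 0, \qquad k = 0, \ldots, T-d, \qquad (\star)
\end{align*}
and I would show that $(\star)$, together with controllability of $(A,B)$ and persistence of excitation of order $n+d$, forces $\xi = 0$ and $\eta_0 = \cdots = \eta_{d-1} = 0$.

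The first step is to turn $(\star)$ into an identity involving only the input signal. Let $P(q) = q^n + \sum_{\ell=0}^{n-1} c_\ell q^\ell$ be the characteristic polynomial of $A$, viewed as a polynomial in the forward-shift operator $q$. Shifting $(\star)$ by $\ell = 0, 1, \ldots, n$ and substituting $x(k+\ell) = A^\ell x(k) + \sum_{j=0}^{\ell-1} A^{\ell-1-j} B u(k+j)$ produces $n+1$ equations; taking the combination dictated by the coefficients of $P$ cancels the state contribution by Cayley--Hamilton, leaving an input-only relation of the form
\begin{align*}
\bigl( P(q) Q(q) + R(q) \bigr)\, u(k) = 0, \qquad k = 0, \ldots, T - n - d,
\end{align*}
where $Q(q) := \sum_{i=0}^{d-1} \eta_i^\tsp q^i$ is a row-vector polynomial of degree at most $d-1$ and $R(q)$ is a row-vector polynomial of degree at most $n-1$ whose coefficients are explicit linear combinations of $\xi^\tsp A^j B$ for $j = 0, \ldots, n-1$.

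The second step reads this identity as $\gamma^\tsp \mc H_{n+d}(u_{[0,T-1]}) = 0$, where $\gamma$ collects the scalar coefficients of $PQ+R$ arranged as a vector in $\real^{(n+d)m}$. Persistence of excitation of order $n+d$ forces $\gamma = 0$, i.e.\ $PQ + R \equiv 0$ as a polynomial. Since $P$ is monic of degree $n$ while $\deg R \leq n-1$, all terms of $PQ+R$ of degree $n$ or higher come from $PQ$ alone, and their vanishing forces $Q \equiv 0$; hence $\eta_0 = \cdots = \eta_{d-1} = 0$ and then $R \equiv 0$. Reading off the coefficients of $R$ in descending degree, starting from the $q^{n-1}$-coefficient $\xi^\tsp B = 0$ and descending inductively, yields $\xi^\tsp A^j B = 0$ for every $j = 0, \ldots, n-1$, i.e.\ $\xi^\tsp [B, AB, \ldots, A^{n-1}B] = 0$. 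By controllability this reachability matrix has full row rank, so $\xi = 0$, contradicting nontriviality of the annihilator.

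The main obstacle is the bookkeeping in Step 1: carrying out the Cayley--Hamilton cancellation on the shifted copies of $(\star)$ so as to package the remaining input contributions into a single polynomial identity $(PQ+R)(q)\, u = 0$. Once that identity is available, the clean separation between the high-degree content (controlled by $Q$) and the low-degree content (controlled by $R$) makes Step 2 essentially mechanical.
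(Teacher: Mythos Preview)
The paper does not prove this lemma at all: it is stated with a citation to Willems, Rapisarda, Markovsky and De~Moor (the $\Box$ at the end of the statement signals ``quoted without proof''), and is then used as a black box in the proofs of Theorems~\ref{thm:pole_placement} and~\ref{thm:eigenstructure_assignment}. So there is nothing in the paper to compare your argument against.

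That said, your proof is correct and is essentially the classical argument. The bookkeeping you flag as the ``main obstacle'' goes through cleanly: shifting $(\star)$ by $\ell=0,\ldots,n$ is legitimate because persistence of excitation of order $n+d$ forces $T\geq n+d$, so the shifted identities are valid for $k=0,\ldots,T-n-d$; the Cayley--Hamilton combination kills the state term and leaves exactly the polynomial identity $(PQ+R)(q)\,u(k)=0$ with $\deg(PQ)\leq n+d-1$ and $\deg R\leq n-1$; and the coefficient of $q^{n-1}$ in $R$ is indeed $c_n\,\xi^\tsp B=\xi^\tsp B$, after which the descending induction yields $\xi^\tsp A^jB=0$ for $j=0,\ldots,n-1$. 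One cosmetic point: as written in the paper, the two blocks $\mc H_1(x_{[0,T-1]})$ and $\mc H_d(u_{[0,T-1]})$ do not have the same number of columns when $d>1$; the intended reading (and the only one the paper actually uses, with $d=1$) is that the state block is truncated to the first $T-d+1$ columns, which is precisely the alignment your $(\star)$ assumes.
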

\smallskip
This condition will play a fundamental role in the sequel.

\section{Problem setting}
\label{sec:3}

In this section, we formulate the problem of interest and discuss 
existing (model-based) techniques for its solution. 

\subsection{Problem formulation}
Consider the discrete-time linear time-invariant system:
\begin{align}\label{eq:open-loop}
x(t+1) = Ax(t) + Bu(t),
\end{align}
where $A \in \real^{n \times n}$ and $B \in \real^{n \times m}$ denote, 
respectively, the system and input matrices, and 
$\map{x}{\integnneg}{\real^{n}}$ and $\map{u}{\integnneg}{\real^{m}}$ 
denote, respectively, the state and input signals. 
We assume that $B$ has full column rank.
The behavior of~\eqref{eq:open-loop} is governed by the \textit{poles} 
of the system, that is, by the eigenvalues of $A$. It is often 
desirable to modify the poles of the system to obtain certain 
properties, such as system stability or a desired transient 
performance. 
This can be achieved by using a state-feedback control law of the form 
$u(t) = -K x(t) + v(t),$ where $\map{v}{\integnneg}{\real^{m}}$ is a 
new free input and $K \in \real^{m \times n}$ is called 
\textit{feedback gain}, which should be chosen so that the controlled 
system
\begin{align}\label{eq:closed-loop}
x(t+1) = (A-BK)x(t) + v(t),
\end{align}
has the desired poles. 
In line with~\cite{AP-RS-TN:15,JK-NN-PV:85,SB-ED:82,MR-SE-AB-FT:09}, we 
make the following assumption.

\begin{assumption}[\bf \textit{Desired set of pole locations}]
\label{as:set_pole_locations}
The set of desired pole locations contains $n$ complex numbers 
$\mc L=\{\lambda_1, \dots , \lambda_n\}$ and is closed under complex 
conjugation. 
\QEDB\end{assumption}
\smallskip 

The data-driven state-feedback pole placement problem is then 
formulated precisely as follows.

\begin{problem}[\bf \textit{Pole placement}]\label{prob:pole_placement}
Given a set of complex numbers $\mc L$ satisfying
Assumption~\ref{as:set_pole_locations} and historical data 
$\mc D = (u_{[0,T-1]},x_{[0,T-1]})$ generated 
by~\eqref{eq:open-loop}, find, when possible, a matrix 
$K \in \real^{m \times n}$ such that the eigenvalues of $A-BK$ are the 
elements of the set $\mc L$.
\QEDB\end{problem}
\smallskip

Conditions for the existence of solutions to the pole placement problem 
are well known~\cite{CTC:84}: a solution exists if and only if $\mc L$ 
contains all uncontrollable modes~\cite{CTC:84} of $(A,B).$
Thus, we will make the following assumption. 
\begin{assumption}[\bf \textit{Controllability}]
\label{as:controllability}
All modes of $(A,B)$ are controllable. 
\QEDB\end{assumption}
\smallskip

In the single-input case ($m = 1$), the solution to 
Problem~\ref{prob:pole_placement}, when it exists, is  
unique~\cite{CTC:84}. 
In the multi-input case $1<m<n$, the feedback gain $K$ that solves the 
pole placement problem is in general non-unique. 
One common way to select a particular $K$ within the ambiguity set is 
to choose the one that assigns the closed-loop eigenstructure:
\begin{align}\label{eq:eigenvector_condition}
(A-BK)X = X \Lambda,
\end{align}
where $\Lambda$ is an $n \times n$ diagonal matrix with spectrum 
given by $\mc L$ and $X$ is a non-singular matrix of associated 
closed-loop eigenvectors, chosen according to some notion of 
optimality.
For instance, the authors in\cite[Sec.~2.5]{JK-NN-PV:85} show that 
choosing a matrix of eigenvectors $X$ that is well-conditioned leads to 
pole locations that are robust against perturbations of the entries of 
$A$.
Motivated by this, in this paper we consider the data-driven 
state-feedback eigenstructure assignment problem, formulated precisely 
as follows.

\begin{problem}[\bf \textit{Eigenstructure assignment}]
\label{prob:eigenstructure_assignment}
Given a set of complex numbers $\mc L$ satisfying
Assumption~\ref{as:set_pole_locations}, a matrix of linearly 
independent eigenvectors $X,$ and historical data 
$\mc D = (u_{[0,T-1]},x_{[0,T-1]})$ generated by~\eqref{eq:open-loop}, 
find, when possible, a matrix $K \in \real^{m \times n}$ such 
that~\eqref{eq:eigenvector_condition} holds.
\QEDB\end{problem}

%

\subsection{Existing model-based pole-placement methods}
Several formulas have been proposed in the literature to solve the pole 
placement and eigenstructure assignment problems. Next, we will 
summarize some of the most celebrated. In what follows, we denote by 
$M^\pinv$ the Moore-Penrose inverse of matrix $M.$

\begin{enumerate}

\item Approach in~\cite[Thm 3]{JK-NN-PV:85}. Let 
$
B = \begin{bmatrix}
U_0, & U_1
\end{bmatrix}
\begin{bmatrix}
Z \\ 0
\end{bmatrix}
$
with $[U_0,~U_1]$ orthogonal and $Z$ nonsingular. 
Then, the following choice 
satisfies~\eqref{eq:eigenvector_condition}:
\begin{align}\label{eq:K1}
K = Z^\inv U_0^\tsp (X\Lambda X^\inv - A).
\end{align}

\item Approach in~\cite[Main Theorem]{SB-ED:82}.  Assume 
that $\lambda(A) \cap \lambda(\Lambda)=\emptyset$ and let $G$ and $X$ 
satisfy $AX - X \Lambda +BG=0$. Then, the following choice 
satisfies~\eqref{eq:eigenvector_condition}:
\begin{align}\label{eq:K2}
K = G X^\inv.
\end{align}

\item Approach in~\cite[Thm 1]{MR-SE-AB-FT:09}. 
Let $X$ be an invertible matrix that satisfies 
$(I - B B^\pinv)  (X \Sigma - AX) = 0.$
Then, the following choice 
satisfies~\eqref{eq:eigenvector_condition}:
\begin{align}\label{eq:K3}
K = B + (X \Sigma X^\inv - A).
\end{align}
\end{enumerate}

It is evident from~\eqref{eq:K1}-\eqref{eq:K3} (see also
Remark~\ref{rem:sensitivity-eigenvalues}) that to obtain a 
numerically-reliable $K$ using these formulas, matrices $(A,B)$ must be 
known with high precision.   

\begin{remark}\label{rem:sensitivity-eigenvalues}
It is possible to quantify the sensitivity of the eigenvalues of $A-BK$ 
against perturbations of the entries of $A$ or $B$ as follows. 
Let $\lambda$ denote a simple eigenvalue of $M:=A-BK$ with left and 
right eigenvectors $x$ and $y$, respectively.
Wilkinson~\cite{JW:88} showed that if a perturbation $\Delta M$ is made 
to the entries of $M$, then there exists a simple eigenvalue 
$\hat \lambda$ of $M + \Delta M$ such that
\begin{align*}
\vert \hat \lambda - \lambda \vert 
\leq \operatorname{cond} (\lambda, M)
\norm{\Delta M} + O (\norm{\Delta M}^2),
\end{align*}
where $\operatorname{cond} (\lambda, M) = \frac{\norm{x}\norm{y}}{\vert y^* x \vert}$ denotes the condition number of $\lambda$. Notice that 
$\operatorname{cond} (\lambda, M) \geq 1$ and 
$\operatorname{cond} (\lambda, M) = 1$ if and only if $M$ is a normal 
matrix, that is $M^\tsp M = M M^\tsp$.
Thus, in a first-order sense, perturbations in the 
entries of $A$ or $B$ lead to shifts in the eigenvalues of 
$A-BK$ as amplified by the condition number of the matrix of 
eigenvectors $X$.
\QEDB\end{remark}
\smallskip 

Since matrices $(A,B),$ in practice, must be first identified from 
(possibly noisy) historical data before the 
formulas~\eqref{eq:K1}-\eqref{eq:K3} can be applied, a promising way to 
reduce the sensitivity of the closed-loop pole locations is to bypass 
the system identification process and to develop methods for determining 
$K$ directly from the collected data. 
Motivated by this, the focus of this paper is 
on deriving direct formulas for pole placement from data that do 
not require the identification of matrices $A$ and $B$.

\section{Data-Driven pole placement}
\label{sec:4}

In this section, we will focus on Problem~\ref{prob:pole_placement}.
We will assume the availability of historical data 
$\mc D = (u_{[0,T-1]},x_{[0,T-1]})$ collected over $T$ time intervals 
generated by~\eqref{eq:open-loop}. 
In what follows, we will denote by $\rangesp\{M\}$ the range space 
generated by the columns of $M$ and by $\nullsp\{M\}$ the null space of 
the columns of $M$.
It will be useful to consider the following 
representation of the data:
\begin{align*}
U_0 &:= \begin{bmatrix}
u(0) & u(2) & \dots & u(T-2)
\end{bmatrix}
 \in \real^{m \times T-1},\\
X_0 &:= \begin{bmatrix}
x(0) & x(1) & \dots & x(T-2)
\end{bmatrix}
 \in \real^{n \times T-1},\\
X_1 &:= \begin{bmatrix}
x(1) & x(2) & \dots & x(T-1)
\end{bmatrix}
 \in \real^{n \times T-1}.
\end{align*}

\begin{theorem}[\bf \textit{Data-driven pole placement}]
\label{thm:pole_placement}
Let 
Assumptions~\ref{as:set_pole_locations}--\ref{as:controllability} 
be satisfied, $\mc L=\{\lambda_1, \dots , \lambda_n\}$, and 
$u_{[0,T-1]}$ be persistently exciting of order $n+1$.
Then, there exists a matrix 
$M = [m_1, \dots , m_n] \in \real^{T-1 \times n}$, with $\rank(M) = n,$
that satisfies:
\begin{align}\label{eq:conditionM}
0 &= (X_1 - \lambda_i X_0) m_i, && \forall i \in \until n.
\end{align}
Moreover, for any $M$ that satisfies~\eqref{eq:conditionM}, the matrix
\begin{align}\label{eq:pole_placement_formula}
K &= - U_0 M (X_0 M )^\pinv,
\end{align}
satisfies 
$\det(A - BK - \lambda I)=0$ for all $\lambda \in \mc L.$
\QEDB\end{theorem}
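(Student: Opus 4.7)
The plan is to translate the data-side condition~\eqref{eq:conditionM} into a statement about $(A,B)$ via the identity $X_1 = A X_0 + B U_0$, which holds because the data were generated by~\eqref{eq:open-loop}. With $v_i := X_0 m_i$ and $g_i := U_0 m_i$, condition~\eqref{eq:conditionM} is equivalent to
\begin{align*}
(A - \lambda_i I)\, v_i + B\, g_i = 0, \qquad i \in \until{n},
\end{align*}
so every $(v_i, g_i)$ must lie in the null space of $[A - \lambda_i I \mid B]$. By the PBH test and Assumption~\ref{as:controllability}, this null space has dimension exactly $m$ for every $\lambda_i$, and any non-zero element has $v_i \neq 0$ since $B$ has full column rank.

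For the existence claim, I would invoke the Fundamental Lemma with $d = 1$, applicable because $u_{[0,T-1]}$ is persistently exciting of order $n+1$, to deduce that $\begin{bmatrix} X_0^\tsp & U_0^\tsp \end{bmatrix}^\tsp$ has full row rank $n+m$. Consequently every prescribed pair $(v_i, g_i)$ in the null space above can be realized as $(X_0 m_i, U_0 m_i)$ for some $m_i \in \real^{T-1}$. A classical (model-based) pole-placement argument, enabled by controllability, then furnishes eigenvector–gain pairs $\{(v_i, g_i)\}_{i=1}^n$ with $\{v_i\}$ linearly independent and satisfying the displayed equation; lifting these back through the Fundamental Lemma produces an $M$ with $\rank(M) = n$ and, crucially, with $V := X_0 M$ invertible.

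For the formula~\eqref{eq:pole_placement_formula}, a direct computation suffices. Since $V$ is invertible, $V^\pinv V = I$, and hence
\begin{align*}
(A - BK)\,V = A X_0 M + B U_0 M \, V^\pinv V = (A X_0 + B U_0) M = X_1 M.
\end{align*}
By~\eqref{eq:conditionM} the $i$-th column of $X_1 M$ equals $\lambda_i X_0 m_i = \lambda_i v_i$, so $(A - BK)\,V = V \Lambda$ with $\Lambda = \diag{\lambda_1,\dots,\lambda_n}$. Each $v_i$ is therefore an eigenvector of $A - BK$ with eigenvalue $\lambda_i$, and in particular $\det(A - BK - \lambda_i I) = 0$ for every $\lambda_i \in \mc L$.

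The step I expect to be the main obstacle is ensuring that the constructed $M$ makes $V = X_0 M$ invertible rather than merely $\rank(M) = n$: these coincide only when the columns of $M$ lifted from the $(v_i, g_i)$ keep the $v_i$'s independent. For pairwise distinct eigenvalues this is automatic, since eigenvectors of a common $A - B K_0$ associated to distinct eigenvalues are linearly independent. For repeated eigenvalues one must exploit the $m$-dimensional freedom in the null space of $[A - \lambda_i I \mid B]$ to hand-pick linearly independent $v_i$'s; formalizing this selection cleanly is the technical core of the argument.
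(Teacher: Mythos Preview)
Your proposal is correct and follows essentially the same route as the paper: both translate~\eqref{eq:conditionM} via $X_1 = AX_0 + BU_0$ into $[A-\lambda_i I,\;B]\begin{bmatrix}X_0\\U_0\end{bmatrix}m_i = 0$, invoke controllability for the $m$-dimensional null space of $[A-\lambda_i I,\;B]$, use the Fundamental Lemma to guarantee that $\begin{bmatrix}X_0\\U_0\end{bmatrix}$ is surjective so that any desired $(v_i,g_i)$ can be lifted to an $m_i$, and then verify columnwise that $(A-BK-\lambda_i I)X_0 m_i = 0$. The paper does the second part column by column whereas you package it as $(A-BK)V = V\Lambda$, but the computation is identical. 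One point worth noting: the issue you flag as the ``main obstacle'' --- that $\rank(M)=n$ alone does not obviously force $X_0 M$ to be invertible --- is handled rather tersely in the paper, which simply asserts that $\rank(M)=n$ implies $\rank(X_0M)=n$; your plan to secure this by first choosing linearly independent $v_i$'s (via a classical pole-placement/eigenvector argument) and then lifting is in fact more careful than what the paper writes.
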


\begin{proof}
To prove existence of $M$, notice that
\begin{align}\label{eq:m_existence_condition_pt1}
(X_1 - \lambda_i X_0) m_i = 
\begin{bmatrix} A-\lambda_iI, & B \end{bmatrix}
\begin{bmatrix} X_0 \\ U_0 \end{bmatrix}
m_i.
\end{align}
Since $(A,B)$ is controllable, $\rank [A-\lambda_iI,~B] = n$ and
thus $[A-\lambda_iI,~B] $ has a nontrivial ($m$-dimensional) right null 
space. Thus, it is sufficient to choose the columns of $M$ so that:
\begin{align}\label{eq:m_existence_condition}
\begin{bmatrix} X_0 \\ U_0 \end{bmatrix}
m_i \in \nullsp\{\begin{bmatrix} A-\lambda_iI, & B \end{bmatrix}\}.
\end{align}
Since $u_{[0,T-1]}$ is persistently exciting of order $n+1,$ 
Lemma~\ref{lem:fundLemmarankHankelMatrix} guarantees 
$\rank [X_0^\tsp, U_0^\tsp]^\tsp = n+m$, and thus $m_i$ can always be 
chosen so that~\eqref{eq:m_existence_condition} holds, thus proving 
existence of $M$.
To show that $\rank(M) =n,$ notice that 
\begin{align}\label{eq:null_space_dimension}
\dim \nullsp \{
\begin{bmatrix} X_0 \\ U_0 \end{bmatrix}\}
\geq mn,
\end{align}
and thus there always exist $n$ linearly independent vectors $m_i$ that 
satisfy~\eqref{eq:m_existence_condition}.

To prove the second part of the claim, notice that
\begin{align}\label{eq:aux_proof1}
0&= (X_1 - \lambda_i X_0) m_i \nonumber\\
&= (AX_0 + BU_0 - \lambda_i X_0) m_i \nonumber\\
&= (A- \lambda_iI) X_0 m_i + BU_0 m_i,
\end{align}
where the last identity follows from $X_1 = AX_0 + BU_0,$ which holds 
because $X_0, X_1, U_0$ are generated by~\eqref{eq:open-loop}.
Next, by using~\eqref{eq:pole_placement_formula} we have
$-U_0 m_i = K X_0 m_i$. 
In fact, since $\rank(M)=n,$ $\rank(X_0M) =n$ and thus $(X_0 M)^\pinv$
is a right inverse of $X_0M.$
By substituting this identity 
into~\eqref{eq:aux_proof1} we obtain:
\begin{align*}
(A- BK- \lambda_iI) X_0 m_i =0,
\end{align*}
which proves the claim. 
\end{proof}

The formula~\eqref{eq:pole_placement_formula} provides an direct way 
to determine feedback gains by performing algebraic operations on the 
data and without first identifying $(A,B)$. 
The condition~\eqref{eq:conditionM} specifies a set of linear equations 
in the unknown $M,$ and thus $M$ can be determined by using standard 
linear equation solvers. 
Finally, we refer to Section~\ref{sec:6} for a discussion on 
the numerical benefits of utilizing~\eqref{eq:pole_placement_formula} 
as compared to the standard model-based pole placement formulas.

%
%
%
%
%
%

\section{Data-driven eigenstructure assignment}
\label{sec:5}

In this section, we will tackle the eigenstructure assignment problem.
%
It is natural to begin by asking ask under what conditions a given 
nonsingular matrix $X$ can be assigned as eigenvectors. 
The following result addresses this question.

\begin{theorem}[\bf \textit{Feasibility of eigenstructure assignment}]
Let Assumptions~\ref{as:set_pole_locations}--\ref{as:controllability}
hold, $X\in \real^{n \times n}$ be a nonsingular matrix, and assume 
that the input $u_{[0,T-1]}$ is persistently exciting of order $n+1$.
There exists a solution $K$
to~\eqref{eq:eigenvector_condition} if and only if 
\begin{align}\label{eq:deltaA}
\Delta A := A - X \Lambda X^\inv 
\in
\rangesp\{ X_1 \begin{bmatrix}
X_0 \\ U_0
\end{bmatrix}^\pinv
\begin{bmatrix}
0 \\ I_m
\end{bmatrix}
\}. 
\end{align}
\QEDB\end{theorem}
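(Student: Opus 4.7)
The plan is to show that the claimed condition is just a data-based rewriting of the well-known model-based solvability condition $\rangesp\{\Delta A\} \subseteq \rangesp\{B\}$ for the matrix equation $BK = \Delta A$, and then to use the Fundamental Lemma to express $B$ purely in terms of data.

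First, since $X$ is nonsingular, the eigenstructure condition~\eqref{eq:eigenvector_condition} is equivalent to $BK = A - X\Lambda X^\inv = \Delta A$. A matrix equation of the form $BK = \Delta A$ admits a solution $K \in \real^{m\times n}$ if and only if every column of $\Delta A$ lies in the column span of $B$, that is, $\Delta A \in \rangesp\{B\}$ (understood column-wise). Thus, modulo the data representation, the theorem reduces to proving that
\begin{align*}
\rangesp\{B\} = \rangesp\{ X_1 \begin{bmatrix} X_0 \\ U_0 \end{bmatrix}^\pinv \begin{bmatrix} 0 \\ I_m \end{bmatrix} \}.
\end{align*}

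Second, I would use the open-loop dynamics to write $X_1 = AX_0 + BU_0 = [A,~B]\begin{bmatrix} X_0 \\ U_0 \end{bmatrix}$, which is the same identity invoked in the proof of Theorem~\ref{thm:pole_placement}. Hence
\begin{align*}
X_1 \begin{bmatrix} X_0 \\ U_0 \end{bmatrix}^\pinv = [A,~B] \begin{bmatrix} X_0 \\ U_0 \end{bmatrix} \begin{bmatrix} X_0 \\ U_0 \end{bmatrix}^\pinv.
\end{align*}
Under Assumption~\ref{as:controllability} together with the hypothesis that $u_{[0,T-1]}$ is persistently exciting of order $n+1$, Lemma~\ref{lem:fundLemmarankHankelMatrix} guarantees that $[X_0^\tsp,\, U_0^\tsp]^\tsp \in \real^{(n+m) \times (T-1)}$ has full row rank $n+m$. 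For any full-row-rank matrix $P$, the Moore–Penrose inverse satisfies $PP^\pinv = I$. Applying this with $P = [X_0^\tsp,\,U_0^\tsp]^\tsp$ yields $X_1 [X_0^\tsp,\,U_0^\tsp]^{\tsp\,\pinv} = [A,~B]$, and post-multiplying by $[0,~I_m]^\tsp$ gives exactly $B$. This identifies the data-based range on the right-hand side of~\eqref{eq:deltaA} with $\rangesp\{B\}$ and completes the equivalence in both directions.

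The only non-routine step is the collapse $\begin{bmatrix} X_0 \\ U_0 \end{bmatrix}\begin{bmatrix} X_0 \\ U_0 \end{bmatrix}^\pinv = I_{n+m}$, which is what the persistence-of-excitation hypothesis buys us via the Fundamental Lemma; without it, one would only recover a projection onto the row span and the right-hand side of~\eqref{eq:deltaA} could be a strict subset of $\rangesp\{B\}$. Once this identification is in place, the forward direction (if such $K$ exists, $\Delta A = BK$ lies in $\rangesp\{B\}$) and the reverse direction (if $\Delta A \in \rangesp\{B\}$, pick $K$ column-by-column from preimages under $B$, which exist since $B$ has full column rank) are immediate. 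The full-column-rank assumption on $B$ additionally makes the resulting $K$ unique, though this is not needed for the stated equivalence.
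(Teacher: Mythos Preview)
Your proof is correct and follows essentially the same approach as the paper: reduce \eqref{eq:eigenvector_condition} to the range condition $\Delta A \in \rangesp\{B\}$, then identify $\rangesp\{B\}$ with the data-based expression via $X_1 = [A,\,B]\begin{bmatrix}X_0\\U_0\end{bmatrix}$ and the full-row-rank property delivered by Lemma~\ref{lem:fundLemmarankHankelMatrix}. The only difference is cosmetic: the paper argues pointwise, writing $Bz = X_1 g$ with $g = \begin{bmatrix}X_0\\U_0\end{bmatrix}^\pinv\begin{bmatrix}0\\z\end{bmatrix} + w$ and showing the null-space component $w$ is annihilated by $X_1$, whereas you invoke the matrix identity $PP^\pinv = I$ for full-row-rank $P$ to obtain $X_1\begin{bmatrix}X_0\\U_0\end{bmatrix}^\pinv = [A,\,B]$ in one step---a slightly more economical route to the same conclusion.
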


\begin{proof}
Notice that~\eqref{eq:eigenvector_condition} holds if and only if 
$$-BK = X\Lambda X^\inv -A.$$
Since $K$ is a free matrix, this holds if and only if the columns of
$X\Lambda X^\inv -A \in \rangesp\{B\}.$
To characterize $\rangesp\{B\},$ let $z \in \real^m$ be arbitrary, and 
notice that $Bz$ can be expressed as:
\begin{align}\label{eq:Bz}
Bz  = 
\begin{bmatrix} A & B \end{bmatrix}
\begin{bmatrix} 0 \\ z \end{bmatrix}
= \begin{bmatrix} A & B \end{bmatrix}
\begin{bmatrix} X_0 \\ U_0 \end{bmatrix} 
g,
\end{align}
for some $g \in \real^{T-1}.$ Here, the last identity follows by 
noting that, because $u_{[0,T-1]}$ is persistently exciting of order 
$n+1$, Lemma~\ref{lem:fundLemmarankHankelMatrix} guarantees 
$\rank \begin{bmatrix} X_0 \\ U_0 \end{bmatrix}  = n+m,$ and thus there 
always exists $g$ such that~\eqref{eq:Bz} holds. 
Since $g$ is guaranteed to exist, any $g$ that satisfies~\eqref{eq:Bz} 
can be expressed as:
\begin{align}\label{eq:expression-g}
g = \begin{bmatrix} X_0 \\ U_0 \end{bmatrix}^\pinv \begin{bmatrix} 0 \\ z \end{bmatrix} + w,
\end{align}
where $w$ satisfies $X_0w=U_0w=0.$
Next, notice that~\eqref{eq:Bz} can be re-expressed as: 
\begin{align}\label{eq:Bz-pt2}
Bz = \begin{bmatrix} A & B \end{bmatrix}
\begin{bmatrix} X_0 \\ U_0 \end{bmatrix} 
g = X_1 g.
\end{align}
By combining~\eqref{eq:expression-g} with \eqref{eq:Bz-pt2}, we 
obtain:
\begin{align*}
Bz = X_1 g  
&= X_1 
\begin{bmatrix} X_0 \\ U_0 \end{bmatrix}^\pinv \begin{bmatrix} 0 \\ z \end{bmatrix} + X_1 w\\
&= 
X_1 
\begin{bmatrix} X_0 \\ U_0 \end{bmatrix}^\pinv \begin{bmatrix} 0 \\ z \end{bmatrix} +  
\begin{bmatrix} A & B \end{bmatrix}
\begin{bmatrix} X_0 \\ U_0 \end{bmatrix}  w\\
&= X_1 
\begin{bmatrix} X_0 \\ U_0 \end{bmatrix}^\pinv \begin{bmatrix} 0 \\ z \end{bmatrix},
\end{align*}
where the last inequality follows from the properties of $w$.
Hence, we conclude
\begin{align*}
\rangesp\{B\} = 
\rangesp\{X_1 \begin{bmatrix}
X_0 \\ U_0
\end{bmatrix}^\pinv
\begin{bmatrix}
0 \\ I_m
\end{bmatrix}
\},
\end{align*}
which proves the claim.
\end{proof}

The theorem provides a characterization of all perturbations $\Delta A$
of the open-loop system matrix $A$ that can be obtained via static 
feedback: these are all and only the matrices that belong to the 
following space:
$$
\rangesp\{X_1 \begin{bmatrix}
X_0 \\ U_0
\end{bmatrix}^\pinv
\begin{bmatrix}
0 \\ I_m
\end{bmatrix}
\}.
$$
When the open-loop system matrix $A$ is known, the theorem also 
provides a condition to determine whether the eigenstructure assignment 
problem admits a solution: the problem is feasible if and only if 
$A - X \Lambda X^\inv $ belongs to the range space of the matrix 
characterized in~\eqref{eq:deltaA}. 
%

Before stating our result, we present the following technical 
lemma, which is a direct consequence of~\cite[Cor 1]{JK-NN-PV:85}.

\begin{lemma}\label{lem:space_dimension}
Let Assumptions~\ref{as:set_pole_locations}--\ref{as:controllability} 
hold, and $X\in \real^{n \times n}$ be a nonsingular matrix. 
Moreover, let $x_j$ denote the $j$-th column of $X$, corresponding to 
the assigned eigenvalue $\lambda_j \in \mc L.$
Then, $x_j \in \mc S_j$, where 
\begin{align*}
\mc S_j = \nullsp \{U^\tsp (A - \lambda_j I)\},
\end{align*}
where the columns of $U$ form a basis for $\nullsp\{B\}.$
Moreover, the dimension of $\mc S_j$ is:
\begin{align*}
\dim(\mc S_j) = m.
\end{align*}
\QEDB\end{lemma}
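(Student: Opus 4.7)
The plan is to prove the two assertions in order, relying on the eigenstructure equation \eqref{eq:eigenvector_condition} for the first and on the PBH controllability test for the second. Since $B$ has full column rank $m$, the set $\nullsp\{B\}$ in the statement must be interpreted as the left null space of $B$, i.e., $U \in \real^{n \times (n-m)}$ with $U^\tsp B = 0$ and $\rank(U) = n-m$; I would make this reading explicit at the start of the proof.

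For the first claim, I would read off the $j$-th column of \eqref{eq:eigenvector_condition}: $(A-BK)x_j = \lambda_j x_j$, which rearranges to
\begin{align*}
(A - \lambda_j I) x_j = B K x_j.
\end{align*}
Left-multiplying by $U^\tsp$ and using $U^\tsp B = 0$ gives $U^\tsp (A-\lambda_j I) x_j = 0$, i.e., $x_j \in \mc S_j$.

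For the second claim, I would compute $\dim(\mc S_j)$ via rank-nullity. The matrix $U^\tsp(A-\lambda_j I)$ has size $(n-m)\times n$, so it suffices to show $\rank(U^\tsp(A - \lambda_j I)) = n - m$. Here I would invoke Assumption~\ref{as:controllability} and the PBH test, which guarantees $\rank[A-\lambda_j I,\; B] = n$, meaning the rows of $[A-\lambda_j I,\; B]$ span $\real^n$. Multiplying on the left by $U^\tsp$ therefore yields a matrix of rank exactly $\rank(U^\tsp) = n-m$; but
\begin{align*}
U^\tsp \begin{bmatrix} A - \lambda_j I & B \end{bmatrix} = \begin{bmatrix} U^\tsp (A - \lambda_j I) & 0 \end{bmatrix},
\end{align*}
so $\rank(U^\tsp(A - \lambda_j I)) = n-m$ and $\dim(\mc S_j) = n - (n-m) = m$.

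There is no real obstacle here: once the interpretation of $U$ as a left-annihilator of $B$ is settled, both steps are one-line linear-algebra arguments. The only subtlety worth underlining is that the PBH criterion is needed for every $\lambda_j$, which is exactly what full controllability (rather than, say, stabilizability) buys us, and this is where Assumption~\ref{as:controllability} is used.
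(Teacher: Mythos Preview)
Your argument is correct. The paper does not actually supply a proof of this lemma; it only states that the result is a direct consequence of \cite[Cor~1]{JK-NN-PV:85}. What you have written is exactly the short self-contained argument behind that corollary: annihilate $B$ on the left to obtain $x_j\in\mc S_j$, and use the PBH test together with rank--nullity to get $\dim(\mc S_j)=m$. Your observation that $\nullsp\{B\}$ must be read as the \emph{left} null space (so $U\in\real^{n\times(n-m)}$ with $U^\tsp B=0$) is spot on and worth stating explicitly, since with the convention used elsewhere in the paper the right null space of a full-column-rank $B$ is trivial.

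One tiny wording nit: when you write ``the rows of $[A-\lambda_j I,\;B]$ span $\real^n$,'' the rows live in $\real^{n+m}$; what you mean (and use) is that this matrix has full row rank $n$, hence admits a right inverse, so left-multiplication by $U^\tsp$ preserves $\rank(U^\tsp)=n-m$. The computation
\begin{align*}
U^\tsp\begin{bmatrix} A-\lambda_j I & B \end{bmatrix}=\begin{bmatrix} U^\tsp(A-\lambda_j I) & 0 \end{bmatrix}
\end{align*}
then gives $\rank\bigl(U^\tsp(A-\lambda_j I)\bigr)=n-m$ and $\dim(\mc S_j)=m$, as you conclude.
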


We remark that this lemma is of model-based nature, and thus the 
provided characterization is of no use when $A$ and $U$ are unknown. 
Despite its nature, in what follows we will next use this lemma for 
technical purposes (to derive necessary conditions for the 
eigenstructure assignment problem to be feasible and in the proof of the 
subsequent result). 
Since the maximum number of independent eigenvectors that can be 
chosen for each assigned eigenvalue is equal to $\dim (\mc S_j)= m,$
it follows that the algebraic multiplicity of the eigenvalue 
$\lambda_j \in \mc L$ to be assigned must be less than or equal to
$m$. The lemma thus motivates the following assumption.


\begin{assumption}[\bf \textit{Set of pole locations}]
\label{as:set_pole_locations_extended}
The desired pole locations $\mc L=\{\lambda_1, \dots , \lambda_n\}$ 
and eigenvectors $X$ satisfy:
\begin{enumerate}
\item $\mc L$ is closed under complex conjugation,
\item $\mc L$ contains $\nu$ complex numbers with associated 
algebraic multiplicities $\{m_1, \dots , m_\nu\}$ satisfying 
$m_1 + \dots + m_\nu = n,$ $m_i\leq m$ for all $i \in \until \nu,$
\item pairs of complex conjugate poles with $\lambda_i = \lambda_j^*$ 
satisfy $m_i = m_j$,
\item $X$ is such that the desired $(A-BK)$ is non-defective (i.e., it 
admits $n$ linearly independent eigenvectors).
\QEDB\end{enumerate}
\end{assumption}
\smallskip

With this technical assumption, we now provide the following formula 
for eigenstructure assignment.

\begin{theorem}[\bf \textit{Data-driven eigenstructure assignment}]
\label{thm:eigenstructure_assignment}
Let 
Assumptions~\ref{as:set_pole_locations}--\ref{as:set_pole_locations_extended} 
be satisfied, $\mc L=\{\lambda_1, \dots , \lambda_n\}$, and 
$u_{[0,T-1]}$ be persistently exciting of order $n+1$.
Then, there exists a matrix 
$M = [m_1, \dots , m_n] \in \real^{T-1 \times n}$, with $\rank(M) = n,$
that satisfies:
\begin{align}\label{eq:conditionM_eigenstructure}
0 &= (X_1 - \lambda_i X_0) m_i, && \forall i \in \until n, \nonumber\\
X &= X_0 M,
\end{align}
Moreover, for any $M$ that satisfies~\eqref{eq:conditionM}, the matrix
\begin{align}\label{eq:eigenstructure_assignment_formula}
K &= - U_0 M (X_0 M )^\pinv
\end{align}
satisfies~\eqref{eq:eigenvector_condition}.
\QEDB\end{theorem}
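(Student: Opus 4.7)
The plan is to mirror the proof of Theorem~\ref{thm:pole_placement} (data-driven pole placement), augmenting the construction with the extra constraint $X_0 M = X$ so that the closed-loop eigenvectors match the prescribed $X$. As before, I would work column by column: construct each $m_i$ separately, then show the $m_i$ are linearly independent, and finally verify that~\eqref{eq:eigenstructure_assignment_formula} returns a $K$ realising $(A-BK)X = X\Lambda$.

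To establish existence of $M$, I would first observe that under Assumption~\ref{as:set_pole_locations_extended} the eigenstructure assignment problem is feasible, so by Lemma~\ref{lem:space_dimension} each column $x_i$ of $X$ satisfies $x_i \in \mc S_i$, which is equivalent to $(A - \lambda_i I) x_i \in \rangesp\{B\}$. Hence there exists $u_i \in \real^{m}$ with $B u_i = -(A - \lambda_i I) x_i$. By persistence of excitation and Lemma~\ref{lem:fundLemmarankHankelMatrix}, the matrix $[X_0^\tsp, U_0^\tsp]^\tsp$ has full row rank $n+m$, so for each $i$ there exists $m_i \in \real^{T-1}$ with $X_0 m_i = x_i$ and $U_0 m_i = u_i$. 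Using $X_1 = A X_0 + B U_0$, I would then verify
$$(X_1 - \lambda_i X_0) m_i = (A - \lambda_i I) X_0 m_i + B U_0 m_i = (A - \lambda_i I) x_i + B u_i = 0,$$
so both conditions in~\eqref{eq:conditionM_eigenstructure} hold column-wise. Moreover, since $X$ is nonsingular, the columns $x_i = X_0 m_i$ are linearly independent, which forces $m_1,\dots,m_n$ themselves to be linearly independent, giving $\rank(M) = n$.

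For the feedback-gain formula, I would observe that $X_0 M = X$ is invertible, so the pseudoinverse reduces to the ordinary inverse, $(X_0 M)^\pinv = X^{\negat 1}$, and~\eqref{eq:eigenstructure_assignment_formula} reads $K = -U_0 M X^{\negat 1}$; column-wise this is $-U_0 m_i = K x_i$. Substituting into the identity derived above yields $(A - BK - \lambda_i I) x_i = 0$ for each $i$, and stacking over $i$ gives $(A - BK) X = X\Lambda$, i.e.,~\eqref{eq:eigenvector_condition}.

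The main obstacle is the existence step: one must jointly enforce the eigenvector equation $(X_1 - \lambda_i X_0) m_i = 0$ \emph{and} the pointwise constraint $X_0 m_i = x_i$, and it is not obvious a priori that both can be realised by a single $m_i$. The resolution is that feasibility of the eigenstructure assignment problem, encoded in Assumption~\ref{as:set_pole_locations_extended} and Lemma~\ref{lem:space_dimension}, supplies a valid $u_i$ with $B u_i = -(A - \lambda_i I) x_i$, and persistence of excitation then lets us realise the pair $(x_i, u_i) \in \real^{n+m}$ as $(X_0 m_i, U_0 m_i)$ for a suitable $m_i \in \real^{T-1}$.
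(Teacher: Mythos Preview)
Your proposal is correct and follows essentially the same approach as the paper: both arguments reduce the first condition in~\eqref{eq:conditionM_eigenstructure} to the null-space condition~\eqref{eq:m_existence_condition}, invoke Lemma~\ref{lem:space_dimension} together with the full row rank of $[X_0^\tsp,\,U_0^\tsp]^\tsp$ to produce $m_i$ satisfying both constraints, and then repeat the algebra of~\eqref{eq:aux_proof1} to verify $(A-BK-\lambda_iI)X_0m_i=0$ before using $X=X_0M$. Your rank argument (nonsingularity of $X=X_0M$ forces $\rank(M)=n$) is in fact a bit cleaner than the paper's appeal to~\eqref{eq:null_space_dimension}.
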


\begin{proof}
We begin by proving the existence of $M$. By iterating the steps 
in \eqref{eq:m_existence_condition_pt1}--\eqref{eq:m_existence_condition} for the first condition 
in~\eqref{eq:conditionM_eigenstructure}, we conclude that that a matrix 
$M$ that satisfies~\eqref{eq:conditionM_eigenstructure} exists if and 
only if the following two conditions hold  simultaneously: 
\begin{align*} 
& \begin{bmatrix} X_0 \\ U_0 \end{bmatrix}
m_i \in \nullsp(\begin{bmatrix} A-\lambda_iI, & B \end{bmatrix}),
&& 
x_i = X_0 m_i,
\end{align*}
where $x_i$ denotes the $i$-th row of $X$. Since 
Lemma~\ref{lem:space_dimension} guarantees that $x_i \in \nullsp\{B\}$ 
and~\eqref{eq:null_space_dimension} holds, we conclude that there exists 
at least $n$ linearly independent vectors that 
satisfy~\eqref{eq:conditionM_eigenstructure}. 
To prove the second part of the claim, notice:
\begin{align*}
0&= (X_1 - \lambda_i X_0) m_i \nonumber
= (AX_0 + BU_0 - \lambda_i X_0) m_i \nonumber\\
&= (A- \lambda_iI) X_0 m_i + BU_0 m_i,
\end{align*}
where the last identity follows from $X_1 = AX_0 + BU_0,$ which holds 
because $X_0, X_1, U_0$ are generated by~\eqref{eq:open-loop}.
Next, by using~\eqref{eq:pole_placement_formula} we have
$-U_0 m_i = K X_0 m_i$. 
In fact, since $\rank(M)=n,$ $\rank(X_0M) =n$ and thus $(X_0 M)^\pinv$
is a right inverse of $X_0M.$
By substituting this identity 
into~\eqref{eq:aux_proof1}:
\begin{align*}
(A- BK- \lambda_iI) X_0 m_i =0,
\end{align*}
from which $\lambda_i$ is an eigenvalue of 
$A-BK$ with eigenvector $X_0m_i$. The conclusion follows using 
$X = X_0 M.$ 
\end{proof}

The formula~\eqref{eq:eigenstructure_assignment_formula} provides an 
explicit way to determine feedback gains that assign the desired 
eigenstructure by performing algebraic computations on the data.
Notice that, with respect the conditions required for pole 
placement~\eqref{eq:conditionM}, assigning the eigenstructure imposes 
$n^2$ additional constraints on matrix $M,$ described by $X = X_0 M$. 
We remark that, similarly to~\eqref{eq:conditionM}, 
condition~\eqref{eq:conditionM_eigenstructure} specifies a set of 
linear equations in the unknown $M,$ and thus $M$ can be determined by 
using standard linear equation solvers.

\section{Numerical analysis}
\label{sec:6}
In this section, we illustrate the methods via 
numerical simulations on two test problems. First, we apply the 
formulas to stabilize the dynamics of a chemical reactor and, second, 
we compare the accuracy of the data-driven 
formulas with respect to a model-based approach. 

Consider the following linear model describing a chemical 
reactor and obtained by 
discretizing~\cite[Example 1]{JK-NN-PV:85} with unitary sample time:
\begin{align}\label{eq:AB_chemical_reactor}
A &= \begin{bmatrix}
    6.9771  &  2.0379  &  5.0672  & -2.2212\\
   -0.6941 &  -0.0434 &  -0.4738  &  0.3425\\
    0.2048 &   0.9081  &  0.3159  &  0.6172\\
   -0.5082 &   0.7106 &  -0.2000  &  0.8531
\end{bmatrix},\nonumber\\
B^\tsp  &= 
\begin{bmatrix}
    4.8874 &   1.4777 &    5.0448 &   4.6020\\
   -6.5545 &   0.5230  & -1.1389 &  -0.1133
\end{bmatrix}.
\end{align}
This system is unstable and the open-loop eigenvalues are:
\begin{align*}
\operatorname{eig}(A) = \{   7.0162,
    1.0798,
    0.0002,
    0.0065 \},
\end{align*}
and thus state feedback is required to stabilize the system. 
Therefore, we move two of the unstable modes into the unitary circle, 
keeping the original stable modes. We thus assign the set:
$
\mc L = \{   0.5, 
    0.3, 
    0.0002,
    0.0065 \}.$
Historical data $(u_{[0,T-1]},x_{[0,T-1]})$  is generated by simulating 
the open-loop system for $T=10$ time steps by applying i.i.id. Gaussian 
noise as the input signal and starting from zero initial conditions. 
The feedback gain obtained as in~\eqref{eq:pole_placement_formula} 
using the built-in \texttt{fsolve} routine in \texttt{Matlab R2022a} to 
solve~\eqref{eq:conditionM} is:
\begin{align}\label{eq:K_chemical_reactor}
K &=
\begin{bmatrix}
   -0.1758 &  -1.3970  &  2.8668 &  -2.4679\\
   -0.4441 &   0.2711 &   4.9848 &  -4.9424
\end{bmatrix},
\end{align}
leading to the closed-loop eigenvalues:
\begin{align*}
\operatorname{eig}(A-BK) = 
\{    0.4999,
    0.3001,
    0.0002,
	0.0066\}.
\end{align*}
We interpret the error between the desired pole locations in $\mc L$ 
and the spectrum of $A-BK$ as a numerical error due to the poor 
conditioning of the regression problem~\eqref{eq:conditionM} resulting 
from the use of data generated by an unstable system (whose state 
diverges over time). 
For example, after $t=10$ time steps, we observed
$\norm{x(10)} = 1.254 \times 10^{5}$, which makes the regression matrix 
in~\eqref{eq:conditionM} numerically unreliable.
To further illustrate this fact, Fig.~\ref{fig:numerical_error_vary_T} 
compares the accuracy of the closed-loop eigenvalues 
when~\eqref{eq:pole_placement_formula} is applied to data generated 
by an unstable system (orange lines) and when, instead, it is applied 
to data generated by a stable system (green lines). 
The latter is obtained by first 
stabilizing~\eqref{eq:AB_chemical_reactor} with $u(t) = -K x(t) +v(t)$
using $K$ given as in~\eqref{eq:K_chemical_reactor} and, 
subsequently, by using the input $v(t) = -K_2 x(t)$ 
(see~\eqref{eq:closed-loop}) with $K_2$ obtained by 
applying~\eqref{eq:pole_placement_formula} to data generated by $A-BK$.
As illustrated by the figure, the accuracy of the resulting pole 
locations deteriorates for increasing values of $T$ (i.e., by using 
increasingly-long trajectories)
when~\eqref{eq:pole_placement_formula} is applied to data generated by 
an unstable system and, on the other hand, the poles accuracy remains 
high when~\eqref{eq:pole_placement_formula} is applied to data 
generated by a stable system. 


\begin{figure}[t]
\centering 
\includegraphics[width=.9\columnwidth]{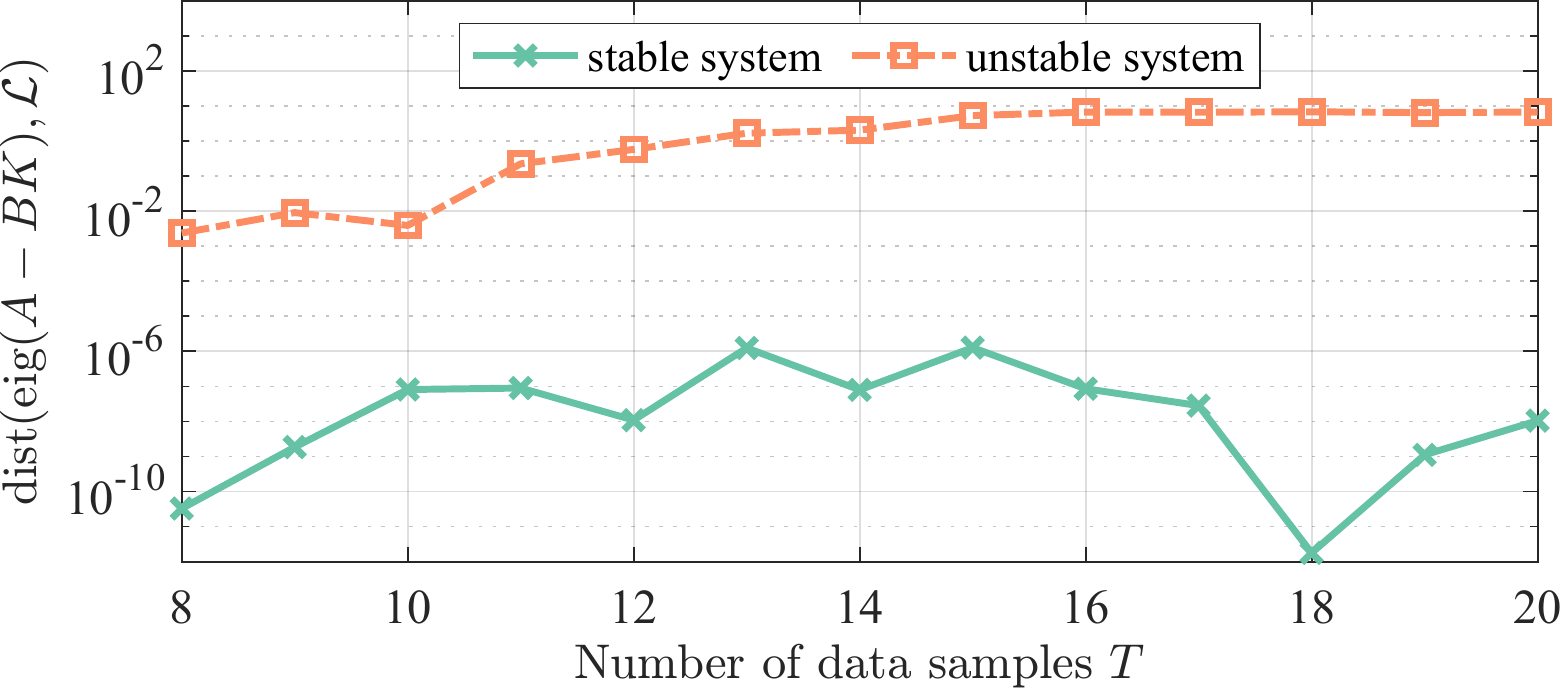}
\caption{Simulations comparing the accuracy of the 
closed-loop pole locations when~\eqref{eq:pole_placement_formula} is 
applied to open-loop data generated by a stable system (green line) and 
an unstable system (orange dashed line), for different values of the 
signal length $T$. Data for the unstable system has been generated by 
simulating the chemical reactor~\eqref{eq:AB_chemical_reactor}, while 
the data for the stable system has been generated by a pre-stabilized 
reactor model. The simulation illustrates 
that~\eqref{eq:pole_placement_formula} is more accurate by 
several orders of magnitude when applied to data generated by a stable 
system.}
\label{fig:numerical_error_vary_T}
\end{figure}

Next, we compare the accuracy of the closed-loop pole 
locations obtained using the data-based 
formula~\eqref{eq:pole_placement_formula} with those obtained 
using a model-based pole placement formula, applied to an identified 
model. In both cases, the methods are applied to noisy data, and we 
conducted Montecarlo simulations by averaging over $100$ experiments. 
To this aim, we generated noisy data by simulating: 
$$x(t+1) = A_0x(t)+B_0u(t)+e(t),$$ 
with $x(0) \sim \mc N(0,I_n)$, 
$u(t) \sim \mc N(0,I_m)$, $e(t)\sim \mc N(0,\sigma_e^2 I_n)$, 
and the matrices $A_0$ and $B_0$ have been chosen randomly and such 
that the modulus of all the eigenvalues of $A_0$ is inside the unit 
circle and $(A_0,B_0)$ is controllable, with $m=\lfloor n/2 \rfloor $. 
We identified $(A_0,B_0)$ from noisy data by solving the least-squares 
problem:
\begin{align*}
\begin{bmatrix} A & B \end{bmatrix}
\in \arg \min_{[A, B]}
\norm{X_1 - 
\begin{bmatrix} A & B \end{bmatrix}
\begin{bmatrix} X_0 \\ U_0\end{bmatrix} }_\text{F}.
\end{align*}
The set $\mc L$ has been chosen so that its entries are uniformly 
distributed in the real interval $[-n,n]$ and, for the model-based 
pole placement, we determined the feedback gain $K$ using the built-in 
\texttt{place} routine in \texttt{Matlab R2022a}.
Fig.~\ref{fig:model_based_vs_data_based} compares the accuracy of the 
closed-loop pole locations obtained by using a model-based placement 
formula and the data-based formula~\eqref{eq:pole_placement_formula} 
for increasing values of the state space size $n$.
As illustrated by the figure, the pole locations obtained using the 
data-based formula~\eqref{eq:pole_placement_formula} are more accurate 
by about one order of magnitude for all considered values of $n$. 
Moreover, by comparing the  results for  three different 
choices of the noise variance: $\sigma_e^2=1$ (top), $\sigma_e^2=10$ 
(middle), and $\sigma_e^2=100$ (bottom), the 
numerics suggest the higher the noise variance, the more the 
data-driven approach becomes preferable over the model-based one.


\begin{figure}[t]
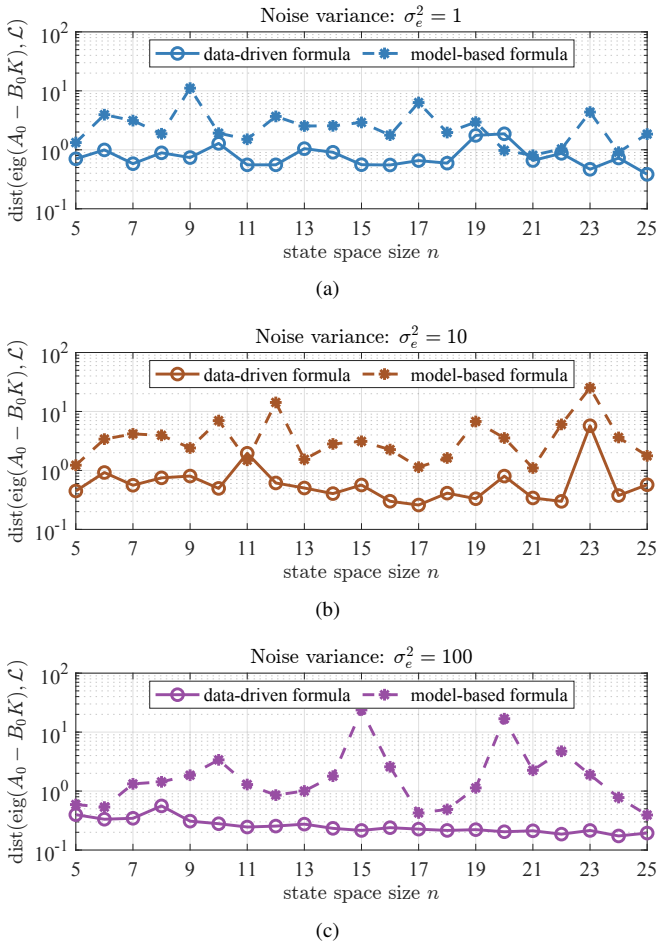

\centering \subfigure[]{\includegraphics[width=1\columnwidth]{%
model_based_vs_data_driven_sigma1}}\\
\centering \subfigure[]{\includegraphics[width=1\columnwidth]{%
model_based_vs_data_driven_sigma10}} \\
\centering \subfigure[]{\includegraphics[width=1\columnwidth]{%
model_based_vs_data_driven_sigma100}} 
\caption{Montecarlo simulation comparing the accuracy 
of~\eqref{eq:pole_placement_formula} when applied to noisy data with 
Gaussian distribution and three different levels of variance: 
$\sigma_e^2=1$ (top), $\sigma_e^2=10$ (middle), and $\sigma_e^2=100$ 
(bottom). The results suggest that the higher the noise variance, the 
more the data-driven formula becomes preferable over a model-based pole 
placement approach. }
\label{fig:model_based_vs_data_based}
\end{figure}

\section{Conclusions}
\label{sec:7}

In this paper, we derived data-driven formulas to compute static 
feedback gains matrices that assign arbitrarily the eigenstructure 
of a linear dynamical system. By leveraging the linearity of the 
dynamics and a persistence of excitation condition, we showed for the 
first time that the closed-loop eigenstructure can be assigned 
\textit{exactly}. Further, we illustrated the benefits of the 
data-driven methods, as compared to the model-based counterpart, 
through a set of numerical simulations, which showcase the numerical 
robustness of the approach, especially in the presence of noise in the 
measured data. This paper also opens several directions for future 
research, including an analytical investigation of the sensitivity of 
the closed-loop pole locations in the presence of noise in the data, 
and the extension to cases where the open-loop system contains 
uncontrollable modes.

\bibliographystyle{IEEEtran}
\bibliography{alias,main_GB,combined,GB,FP}
\end{document}